\documentclass[11pt,reqno]{amsart}
\usepackage{geometry}
\usepackage{amssymb, amsmath, amsthm, color, tikz,enumerate,mathrsfs}
\usepackage{pdfpages}
\usepackage{hyperref}
\usepackage{textcomp}
\usepackage{color}
\usepackage{xcolor}
\usepackage{listings}

\usepackage{fullpage}

\newtheorem{lemma}{Lemma}[section]
\newtheorem{proposition}[lemma]{Proposition}
\newtheorem{theorem}[lemma]{Theorem}
\newtheorem{corollary}[lemma]{Corollary}

\theoremstyle{definition}

\makeatletter
\newcommand{\setword}[2]{%
  \phantomsection
  #1\def\@currentlabel{\unexpanded{#1}}\label{#2}%
}
\makeatother

\numberwithin{equation}{section}

\newcommand{\Rrr}{\mathbb{R}}

\newcommand{\bl}[2]{(#1,#2)}
\newcommand{\bigbl}[2]{\left(#1,#2\right)}

\DeclareMathOperator{\trace}{tr}
\DeclareMathOperator{\Vol}{Vol}

\makeatletter
\@namedef{subjclassname@2020}{%
  \textup{2020} Mathematics Subject Classification}
\makeatother

\begin{document}

\title{Volumes of consecutively defined sets}

\author{Richard Ehrenborg and Evan Henning}

\address{Department of Mathematics, University of Kentucky, Lexington,
  KY 40506-0027, USA.\hfill\break \tt http://www.math.uky.edu/\~{}jrge/,
  richard.ehrenborg@uky.edu.}

\address{Department of Mathematics, University of Kentucky, Lexington,
  KY 40506-0027, USA.\hfill\break \tt 
  ehe294@uky.edu.}
\subjclass[2020]
{Primary
52A38; 
Secondary
05A15, 
15A18, 
45C05.} 

\keywords{
Self-adjoint integral operator,
Spectrum,
Trace class operator.}

\date{\today.}

\begin{abstract}
We study a variant of the graph polytopes of a path
and of a cycle
where we replace the inequality
$x_{i} + x_{i+1} \leq 1$
with the two inequalities
$(1-\alpha) \cdot x_{i} + \alpha \cdot x_{i+1} \leq \alpha$
for $0 \leq x_{i} \leq \alpha$
and
$\alpha \cdot x_{i} + (1-\alpha) \cdot x_{i+1} \leq \alpha$
for $\alpha \leq x_{i} \leq 1$.
Using a self-adjoint operator
and its eigenvalues we obtain
convergent series for their volumes.
As a corollary we obtain that the volumes
of the set associated to a path on $n$ vertices
and
the set associated to a cycle on $n+1$ vertices
are related by a constant factor of $\alpha$.
\end{abstract}

\maketitle

\section{Introduction}

The two polytopes
\begin{align}
&
\{(x_{1},x_{2}, \ldots, x_{n}) \in [0,1]^{n} :
x_{i} + x_{i+1} \leq 1 \text{ for } 1 \leq i \leq n-1\} , 
\label{equation_P_1/2}
\\
&
\{(x_{1},x_{2}, \ldots, x_{n}) \in [0,1]^{n} :
x_{i} + x_{i+1} \leq 1 \text{ for } 1 \leq i \leq n\} ,
\label{equation_Q_1/2}
\end{align}
where $x_{n+1} = x_{1}$,
have been well studied in the literature.
See the papers~\cite{Beukers_Calabi_Kolk,
Ehrenborg_cycle,
Ehrenborg_Levin_Readdy,
Elkies,
Simons_Yao}.
These two polytopes can also be described as the graph polytopes
of a path, respectively, a cycle;
see the concluding remarks at the end of the paper.
They are closely related to alternating permutations
and hence the Euler numbers.
In this paper we introduce a generalization of these polytopes by
replacing the inequality between two adjacent variables
with the pair of inequalities
\begin{align*}
\frac{1-\alpha}{\alpha} \cdot x_{i} + x_{i+1} & \leq 1,
& & \text{for } 0 \leq x_{i} \leq \alpha, \\
x_{i} + \frac{1-\alpha}{\alpha} \cdot x_{i+1} & \leq 1,
& & \text{for } \alpha \leq x_{i} \leq 1 .
\end{align*}
See Figure~\ref{figure} for a visualization of these
inequalities.
When $\alpha = 1/2$ they yield the inequality $x_{i} + x_{i+1} \leq 1$
and hence the two polytopes~\eqref{equation_P_1/2}
and~\eqref{equation_Q_1/2}.
Note that when $1/2 < \alpha < 1$ our new sets are also
polytopes, whereas for the interval $0 < \alpha < 1/2$
the sets are non-convex.

\begin{figure}[b]
\begin{center}
\begin{tikzpicture}[scale = 2]
\draw[->] (-0.3,0) -- (1.4,0) node[above]{$s$};
\draw[->] (0,-0.3) -- (0,1.4) node[right]{$t$};
\draw[thick] (0,0) rectangle (1,1);
\draw[-,thick] (0,1) -- (0.7,0.7) -- (1,0);
\draw[-] (0.05,1) -- (-0.05,1) node[left]{$1$};
\draw[-] (0.05,0.7) -- (-0.05,0.7) node[left]{$\alpha$};
\draw[-] (1,0.05) -- (1,-0.05) node[below]{$1$};
\draw[-] (0.7,0.05) -- (0.7,-0.05) node[below]{$\alpha$};
\node at (0.4,0.4) {$1$};
\node at (0.85,0.87) {$0$};
\end{tikzpicture}
\end{center}
\caption{The symmetric function $\chi$.}
\label{figure}
\end{figure}

To be more explicit,
let $\ell_{1}(t)$ and $\ell_{2}(t)$
be the two affine linear functions
\begin{align*}
\ell_{1}(t)
& =
1-\frac{1-\alpha}{\alpha} \cdot t ,
&
\ell_{2}(t)
& =
\alpha \cdot \frac{1-t}{1-\alpha} .
\end{align*}
Observe that they are inverses of each other.
Also note that
$\ell_{1}(0) = 1$,
$\ell_{1}(\alpha) = \ell_{2}(\alpha) = \alpha$
and $\ell_{2}(1) = 0$.
That is, $\ell_{1}$ maps the interval $[0,\alpha]$
to the interval $[\alpha,1]$.
Similary, $\ell_{2}$ maps the interval~$[\alpha,1]$
back to the interval~$[0,\alpha]$.
Define the self-inverse function $b(t)$ by
\begin{align}
b(t)
& = 
\begin{cases}
\ell_{1}(t) & \text{if } 0 \leq t \leq \alpha, \\
\ell_{2}(t) & \text{if } \alpha \leq t \leq 1.
\end{cases}
\label{equation_b}
\end{align}
Now define the two $n$-dimensional sets $P_{n}$ and $Q_{n}$ by
\begin{align*}
P_{n}
& =
\{(x_{1},x_{2}, \ldots, x_{n}) \in [0,1]^{n} :
x_{i+1} \leq b(x_{i}) \text{ for } 1 \leq i \leq n-1\} , \\
Q_{n}
& =
\{(x_{1},x_{2}, \ldots, x_{n}) \in [0,1]^{n} :
x_{i+1} \leq b(x_{i}) \text{ for } 1 \leq i \leq n\} ,
\end{align*}
where we use that $x_{n+1} = x_{1}$ in the definition of the set $Q_{n}$.
Again note how the set $P_{n}$ corresponds to a path and $Q_{n}$ to a cycle.

In Theorem~\ref{theorem_volumes_P_Q} we express the
volumes of these sets in terms of convergent series.
Our technique is to reformulate the problem into
understanding a linear operator $T$ and its spectrum.
This linear operator is on the space $L^{2}([0,1])$
and is the continuous analogue of a transfer matrix.
That is, the kernel $\chi(s,t)$ plays the role of the adjacency matrix,
$T$ replaces multiplication with this matrix,
the inner product $(1,T^{n-1}(1))$
is the continuous analogue of path enumeration
and
the trace $\trace(T^{n})$
is the cycle enumeration.
In Section~\ref{section_general} we introduce the
operator in a more general case,
where we replace the function $b(t)$ with
a more general self-inverse function $c(t)$.
In Section~\ref{section_alpha} we obtain the
spectrum for our operator in the piecewise linear function case
and hence the sought-after volumes of our two sets.
A particularly striking consequence is
Corollary~\ref{corollary_factor_alpha}
which states that the volumes of $P_{n}$ and~$Q_{n+1}$
differ by a multiplicative constant $\alpha$.
We also use an identity of Lipschitz to give an explicit expression
for the volume of the set $Q_{n}$.
We end the paper with open questions and directions
for future research.

\section{The general case of a self-inverse function}
\label{section_general}

In this section we take a more general approach in understanding the
volumes of the two sets $P_{n}$ and $Q_{n}$.
We introduce a linear operator $T$ on the space $L^{2}([0,1])$,
that is, the space of square integrable functions on the interval $[0,1]$.
The role of the operator $T$ is analogous to a transfer operator for consecutive constraints.
Repeated application of this operator builds admissible sequences of coordinates,
so the volume of $P_{n}$ is obtained from $T^{n-1}$,
while the volume of $Q_{n}$ is the trace of $T^{n}$.
Because the kernel is symmetric, $T$ is self-adjoint and can be diagonalized.
Furthermore, by the spectral theorem, the operator only has
real eigenvalues.

Let $c: [0,1] \longrightarrow [0,1]$ be a decreasing
continuous function that is its own inverse.
That is, we have
$c(0) = 1$,
$c(1) = 0$
and
$c(c(t)) = t$.
Let $\chi(s,t)$ be a function on
the unit square $[0,1]^{2}$ defined by
\begin{align*}
\chi(s,t)
& =
\begin{cases}
1 & \text{ if } s \leq c(t) , \\
0 & \text{ otherwise. }
\end{cases}
\end{align*}
Apply the decreasing function $c$ to the inequality $s \leq c(t)$
and we obtain $c(s) \geq c(c(t)) = t$. 
By a second application of the function $c$ we have that the two inequalities are equivalent.
Hence the function $\chi$ is symmetric,
that is,
$\chi(s,t) = \chi(t,s)$.
Let $P_{n}$ and $Q_{n}$ be the two $n$-dimensional sets
defined by
\begin{align*}
P_{n}
& =
\{(x_{1},x_{2}, \ldots, x_{n}) \in [0,1]^{n} :
x_{i+1} \leq c(x_{i}) \text{ for } 1 \leq i \leq n-1\} , \\
Q_{n}
& =
\{(x_{1},x_{2}, \ldots, x_{n}) \in [0,1]^{n} :
x_{i+1} \leq c(x_{i}) \text{ for } 1 \leq i \leq n\} .
\end{align*}
Note that when the function $c(t)$ is concave down
the two sets $P_{n}$ and $Q_{n}$ are convex.
Observe that the two sets introduced in the introduction is
a special case of this setup.

Define the operator $T$ on the space $L^{2}([0,1])$ by
\begin{align*}
T(f)(t)
& = 
\int_{0}^{1} \chi(t,s) \cdot f(s) \: ds 
=
\int_{0}^{c(t)} f(s) \: ds .
\end{align*}
The operator $T$ is a Hilbert--Schmidt operator
with kernel $\chi$.
That should be viewed as the continuous analogue
of matrix times vector multiplication
where the function $\chi(t,s)$ corresponds to the matrix.
Furthermore, since the function $\chi$ is symmetric,
the operator $T$ is self-adjoint.
Again, the discrete analogue is a symmetric matrix.
The spectral theorem~\cite[XIII.4 Theorem~2]{Dunford_Schwartz}
implies that
all the eigenvalues are real and
the eigenfunctions form a complete orthogonal set.

We also observe that $0$ is not an eigenvalue of the operator~$T$,
since if we have $T(f) = 0$ then differentiating
this equation yields
$c^{\prime}(t) \cdot f(c(t)) = 0$
and hence $f(t) = 0$.

The relation between $T$ and the set $P_{n}$
is demonstrated by the following (small) example,
where we let $1$ denote the constant function on the interval $[0,1]$:
\begin{align*}
\Vol(P_{3})
& =
\int_{0}^{1} \int_{0}^{1} \int_{0}^{1} \chi(x_{1},x_{2}) \cdot \chi(x_{2},x_{3}) \: dx_{3}dx_{2}dx_{1} \\
& =
\int_{0}^{1} \int_{0}^{1} \chi(x_{1},x_{2}) \cdot
\left(\int_{0}^{1} \chi(x_{2},x_{3}) \cdot 1 \: dx_{3} \right)
dx_{2}dx_{1} \\
& =
\int_{0}^{1} \int_{0}^{1} \chi(x_{1},x_{2}) \cdot
T(1)(x_{2}) \:
dx_{2}dx_{1} \\
& =
\int_{0}^{1} T^{2}(1)(x_{1}) \: dx_{1} \\
& =
(1, T^{2}(1)) ,
\end{align*}
where $(\cdot,\cdot)$ denotes the inner product on the space $L^{2}([0,1])$.
Recall that the trace of a matrix is the sum of the entries on the diagonal.
Similarly for a Hilbert--Schmidt operator $U$
with kernel $\kappa$ on the space $L^{2}([0,1])$,
that is,
$U(f)(t) = \int_{0}^{1} \kappa(t,s) \cdot f(s) \: ds$,
the trace is defined by
\begin{align*}
\trace(U)
& =
\int_{0}^{1} \kappa(t,t) \: dt .
\end{align*}
We now express the volume of the set $Q_{3}$ in terms of the trace
of the operator $T^{3}$.
First note that $T^{3}$ is given by
\begin{align*}
T^{3}(f)(x_{1})
& =
\int_{0}^{1} \int_{0}^{1} \int_{0}^{1} \chi(x_{1},x_{2}) \cdot \chi(x_{2},x_{3}) \cdot \chi(x_{3},x_{4})
\cdot f(x_{4})
\: dx_{4}dx_{3}dx_{2} 
\end{align*}
and hence we have
\begin{align*}
\Vol(Q_{3})
& =
\int_{0}^{1} \int_{0}^{1} \int_{0}^{1} \chi(x_{1},x_{2}) \cdot \chi(x_{2},x_{3}) \cdot \chi(x_{3},x_{1})
\: dx_{3}dx_{2}dx_{1}
=
\trace(T^{3}) .
\end{align*}
Recall that the trace of a matrix is also given by the sum of the eigenvalues.
An operator is trace class if the sum of its eigenvalues converges absolutely.
In this case the trace is also given by the sum of the eigenvalues.
\begin{theorem}
Let $n$ be greater than or equal to $2$.
Let $K$ be an index set for the eigenfunctions~$\varphi_{k}$ of the operator $T$
with eigenvalues $\lambda_{k}$.
Then the volume of the set $P_{n}$ is given by
\begin{align}
\Vol(P_{n})
& =
\sum_{k \in K}
\frac{\bl{1}{\varphi_{k}}^{2}}{\bl{\varphi_{k}}{\varphi_{k}}}
\cdot
\lambda_{k}^{n-1} 
\label{equation_volume_P}
\end{align}
Under the assumption that the series
$\sum_{k \in K} \lambda_{k}^{n}$ converges absolutely
the volume of the set $Q_{n}$ is given by
\begin{align}
\Vol(Q_{n})
& =
\sum_{k \in K}
\lambda_{k}^{n} .
\label{equation_volume_Q}
\end{align}
\label{theorem_volumes}
\end{theorem}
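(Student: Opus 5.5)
The plan is to expand everything in the orthogonal eigenbasis $\{\varphi_{k}\}_{k \in K}$ of $T$, which the spectral theorem~\cite[XIII.4 Theorem~2]{Dunford_Schwartz} provides. Since $0$ is not an eigenvalue, the eigenfunctions are complete in $L^{2}([0,1])$. The first step generalizes the $n=3$ computation by induction on $n$: by Fubini,
\begin{align*}
\Vol(P_{n}) = \bl{1}{T^{n-1}(1)} ,
\end{align*}
since integrating out $x_{n}, x_{n-1}, \ldots, x_{2}$ successively applies $T$ each time.

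For the path formula, write the constant function as its Fourier series $1 = \sum_{k} a_{k} \varphi_{k}$, where $a_{k} = \bl{1}{\varphi_{k}}/\bl{\varphi_{k}}{\varphi_{k}}$. This series converges in $L^{2}$. The operator $T^{n-1}$ is bounded, so it commutes with the $L^{2}$-limit, and $T^{n-1}(1) = \sum_{k} a_{k} \lambda_{k}^{n-1} \varphi_{k}$ in $L^{2}$. Taking the inner product with $1$ is continuous, and orthogonality gives
\begin{align*}
\bl{1}{T^{n-1}(1)} = \sum_{k} a_{k} \lambda_{k}^{n-1} \bl{1}{\varphi_{k}} ,
\end{align*}
which is~\eqref{equation_volume_P}. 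The series converges absolutely: the $\lambda_{k}$ are bounded by $\|T\|$, and $\sum_{k} \bl{1}{\varphi_{k}}^{2}/\bl{\varphi_{k}}{\varphi_{k}} = \bl{1}{1} = 1$ by Parseval. No hypothesis on the spectrum is needed here.

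For the cycle formula, first identify $\Vol(Q_{n})$ with $\int_{0}^{1} \kappa_{n}(t,t)\,dt$, where $\kappa_{n}$ is the $(n-1)$-fold iterated kernel of $T^{n}$. This is again Fubini, exactly as in the $Q_{3}$ example. The work is in justifying that this diagonal integral equals $\sum_{k} \lambda_{k}^{n}$. Since $n \geq 2$, $T^{n} = T \cdot T^{n-1}$ with both factors Hilbert--Schmidt, so $T^{n}$ is trace class. Its kernel can be written as
\begin{align*}
\kappa_{n}(t,u) = \int_{0}^{1} \chi(t,s)\, \kappa_{n-1}(s,u) \: ds .
\end{align*}
With normalized eigenfunctions $\psi_{k} = \varphi_{k}/\|\varphi_{k}\|$, expand $\chi(t,\cdot) = \sum_{k} \lambda_{k} \psi_{k}(t) \psi_{k}$ and $\kappa_{n-1}(\cdot,u) = \sum_{k} \lambda_{k}^{n-1} \psi_{k}(u) \psi_{k}$ in $L^{2}$ for almost every $t,u$. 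Parseval then gives $\kappa_{n}(t,u) = \sum_{k} \lambda_{k}^{n} \psi_{k}(t) \psi_{k}(u)$ pointwise. Setting $u=t$ and integrating term by term, using $\int_{0}^{1} \psi_{k}^{2} = 1$, yields $\sum_{k} \lambda_{k}^{n}$.

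The interchange of sum and integral in this last step is the main obstacle. Setting $u=t$ on a measure-zero diagonal needs care, so I would use the Cauchy--Schwarz factorization above rather than a raw $L^{2}$ expansion of $\kappa_{n}$. For $n$ even, every term $\lambda_{k}^{n}\psi_{k}(t)^{2}$ is nonnegative, so monotone convergence justifies the interchange. In general I would use the assumed absolute convergence of $\sum_{k} \lambda_{k}^{n}$: the partial sums are dominated by $\sum_{k} |\lambda_{k}|^{n} \psi_{k}(t)^{2}$, which has finite integral, so dominated convergence applies. Alternatively, one can cite Lidskii's theorem for the trace class operator $T^{n}$ together with continuity of the kernel $\kappa_{n}$ for $n \geq 2$.
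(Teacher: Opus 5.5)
Your proof is correct. For $P_{n}$ it follows the paper's argument, and you additionally justify the limit interchanges: boundedness of $T^{n-1}$, continuity of the inner product, and Parseval for absolute convergence.

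For $Q_{n}$ you take a genuinely different route. The paper only cites Dunford--Schwartz, Section~XI.8, Exercise~49. That exercise gives three things under absolute convergence of $\sum_{k}\lambda_{k}^{n}$: the operator $T^{n}$ is trace class, its trace is the eigenvalue sum, and (part~(c)) the trace equals the diagonal integral of the kernel. You instead prove the needed Mercer-type identity directly.

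The key points of your version are these. The iterated kernel $\kappa_{n}(t,u)$ is a genuine pointwise integral. It factors as the inner product of $\chi(t,\cdot)$ and $\kappa_{n-1}(\cdot,u)$, whose Fourier coefficients are $\lambda_{k}\psi_{k}(t)$ and $\lambda_{k}^{n-1}\psi_{k}(u)$. Parseval therefore gives the expansion outside a null set in each variable separately, which is exactly what makes restriction to the measure-zero diagonal legitimate. Monotone or dominated convergence then finishes the argument.

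The paper's version is shorter, but it outsources precisely the delicate point of what the diagonal of an $L^{2}$ kernel means. Yours is self-contained.

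Your method also yields slightly more than you claim. By Parseval, $\sum_{k}\lambda_{k}^{2}\psi_{k}(t)^{2} = \|\chi(t,\cdot)\|^{2} = c(t) \leq 1$ for almost every $t$. Hence $\sum_{k}|\lambda_{k}|^{n}\psi_{k}(t)^{2} \leq \|T\|^{n-2}$, a constant dominating function. So dominated convergence applies with no hypothesis on the spectrum, and integrating shows $\sum_{k}|\lambda_{k}|^{n} < \infty$. In other words, the absolute-convergence assumption in the theorem is automatic for $n \geq 2$. Your own remark that $T^{n} = T \cdot T^{n-1}$ is trace class already implies this, as does $\sum_{k}\lambda_{k}^{2} = \int_{0}^{1}\int_{0}^{1}\chi^{2} < \infty$.
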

\begin{proof}
Begin by observing that
\begin{align*}
T^{m}(f)(t)
& =
\int_{[0,1]^{m}}
\chi(t,s_{m}) \cdot \chi(s_{m},s_{m-1}) \cdots \chi(s_{2},s_{1})
\cdot f(s_{1}) 
\: ds_{1} \cdots ds_{m} .
\end{align*}
Expand the constant function $1$ 
in the complete orthogonal basis
$\{\varphi_{k}\}_{k \in K}$:
\begin{align*}
1
& =
\sum_{k \in K}
\frac{\bl{1}{\varphi_{k}}}{\bl{\varphi_{k}}{\varphi_{k}}}
\cdot
\varphi_{k} .
\end{align*}
Now the volume of the set $P_{n}$ is given by
\begin{align*}
\Vol(P_{n})
& =
\int_{0}^{1}
\int_{[0,1]^{n-1}}
\chi(t,s_{n-1}) \cdot \chi(s_{n-1},s_{n-2}) \cdots \chi(s_{2},s_{1})
\: ds_{1} \cdots ds_{n-1} \: dt \\
& =
\bl{1}{T^{n-1}(1)} \\
& =
\bigbl{1}{
\sum_{k \in K}
\frac{\bl{1}{\varphi_{k}}}{\bl{\varphi_{k}}{\varphi_{k}}}
\cdot
\lambda_{k}^{n-1}
\cdot
\varphi_{k} 
} \\
& =
\sum_{k \in K}
\frac{\bl{1}{\varphi_{k}}^{2}}{\bl{\varphi_{k}}{\varphi_{k}}}
\cdot
\lambda_{k}^{n-1} .
\end{align*}

Recall that $T^{n}$ is a Hilbert--Schmidt operator.
Since the sum $\sum_{k} \lambda_{k}^{n}$
converges absolutely by the assumption of the theorem,
the operator~$T^{n}$ is a trace class operator;
see
Section~XI.8, Exercise~49 in~\cite{Dunford_Schwartz}.
The trace of the operator is given by
the convergent sum
$$  \trace(T^{n}) = \sum_{k \in K} \lambda_{k}^{n}  .  $$
By part (c) of the above mentioned exercise in~\cite{Dunford_Schwartz},
the trace of $T^{n}$ is also given by
the following integral:
\begin{align*}
\trace(T^{n})
& = 
\int_{[0,1]^{n}} 
\chi(s_{1},s_{2}) \cdots \chi(s_{n-1},s_{n}) \cdot \chi(s_{n},s_{1})
\: ds_{1} \cdots ds_{n}
= 
\Vol(Q_{n}) .
\qedhere
\end{align*}
\end{proof}

Note that equation~\eqref{equation_volume_P}
is reminiscent of Theorem~1.1
in~\cite{Ehrenborg_Kitaev_Perry}.
However, since the operator here is self-adjoint
we obtain an exact result.
Equation~\eqref{equation_volume_Q}
is similar to Theorem~3.2 in~\cite{Ehrenborg}.

\begin{proposition}
Assume that $c(t)$ is a piecewise twice differentiable function.
Let $\varphi(t)$ be an eigenfunction
of the operator $T$ with eigenvalue~$\lambda$.
Then the eigenfunction $\varphi(t)$ satisfies
the second order linear differential equation
\begin{align*}
\varphi''(t)
-
\frac{c''(t)}{c'(t)} \cdot \varphi'(t)
-
\frac{c'(t)}{\lambda^{2}} \cdot \varphi(t)
=
0,
\end{align*}
and satisfies the boundary conditions
$\varphi(1) = 0$ and $\varphi'(0) = 0$.
\label{proposition_general_2nd_order}
\end{proposition}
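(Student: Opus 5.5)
We start from the eigenvalue equation
\begin{align*}
\lambda \cdot \varphi(t) = T(\varphi)(t) = \int_{0}^{c(t)} \varphi(s) \: ds .
\end{align*}
Since $0$ is not an eigenvalue, we have $\lambda \neq 0$. We therefore divide by $\lambda$ and use the right-hand side to bootstrap regularity. The function $\varphi$ lies in $L^{2}([0,1]) \subseteq L^{1}([0,1])$, so $s \mapsto \int_{0}^{s}\varphi$ is continuous, and hence $\varphi = \lambda^{-1} \int_{0}^{c(t)} \varphi$ is continuous. Iterating, $\varphi$ is differentiable wherever $c$ is, with
\begin{align*}
\lambda \cdot \varphi'(t) = c'(t) \cdot \varphi(c(t)) .
\end{align*}
Because $c$ is piecewise twice differentiable, $\varphi(c(t))$ is differentiable at all but finitely many points, so $\varphi$ is piecewise twice differentiable. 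All identities below are understood away from these finitely many break points.

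The boundary conditions come first. Setting $t=1$ in the integral equation and using $c(1)=0$ gives $\lambda \varphi(1) = \int_{0}^{0}\varphi = 0$, so $\varphi(1)=0$. For the condition at $0$ we use the first-derivative identity, which gives $\lambda \varphi'(0) = c'(0)\varphi(c(0)) = c'(0)\varphi(1) = 0$. Here $\varphi'(0)$ is read as a one-sided derivative (or a limit), which is legitimate since $c$ is differentiable near $0$ from the right.

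For the ODE, we differentiate $\lambda \varphi'(t) = c'(t)\varphi(c(t))$ once more:
\begin{align*}
\lambda \cdot \varphi''(t) = c''(t) \cdot \varphi(c(t)) + c'(t)^{2} \cdot \varphi'(c(t)) .
\end{align*}
We eliminate the two terms evaluated at $c(t)$ as follows.
\begin{itemize}
\item From the first-derivative identity, $\varphi(c(t)) = \lambda \varphi'(t)/c'(t)$. Here $c'(t) \neq 0$ at points where the equation is meant to hold, which is implicit in the statement since it divides by $c'$.
\item Applying the same identity at the point $c(t)$ and using $c(c(t))=t$ gives $\lambda \varphi'(c(t)) = c'(c(t))\varphi(t)$.
\item Differentiating $c(c(t))=t$ gives $c'(c(t)) \cdot c'(t) = 1$, so $\varphi'(c(t)) = \varphi(t)/(\lambda c'(t))$.
\end{itemize}
Substituting both expressions yields
\begin{align*}
\lambda \cdot \varphi''(t) = \lambda \cdot \frac{c''(t)}{c'(t)} \cdot \varphi'(t) + \frac{c'(t)}{\lambda} \cdot \varphi(t) .
\end{align*}
Dividing by $\lambda$ gives exactly the stated equation.

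The main obstacle is not the algebra but the regularity bookkeeping. We must justify that the eigenfunction, a priori only in $L^{2}$, is continuous and piecewise $C^{2}$. We must also check that the break points of $c$ and of $c \circ c$ are handled consistently: a break at $t_{0}$ corresponds to a break at $c(t_{0})$, and there are only finitely many such points. Finally, the endpoint derivative $\varphi'(0)$ has to be interpreted one-sidedly. We handle all of this with the bootstrap argument above, applying the fundamental theorem of calculus on each subinterval where $c$ is $C^{2}$.
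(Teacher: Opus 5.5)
Your proof is correct and follows the paper's argument essentially step for step: you differentiate the eigenvalue equation twice, substitute $t \mapsto c(t)$ in the first-derivative identity, use $c'(t)\,c'(c(t)) = 1$, and read off the boundary conditions at $t=1$ and $t=0$. Your regularity bootstrap (continuity, then piecewise $C^{2}$, with the break points of $c$ and $c \circ c$ tracked) and the remark that $\lambda \neq 0$ are careful additions that the paper leaves implicit.
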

\begin{proof}
The eigenfunction equation states that
\begin{align}
\lambda \cdot \varphi(t)
& =
\int_{0}^{c(t)} \varphi(s) \: ds .
\label{equation_general_defining}
\end{align}
Differentiate this equation twice
\begin{align}
\lambda \cdot \varphi'(t)
& =
c'(t) \cdot \varphi(c(t)) , 
\label{equation_general_varphi'} \\
\lambda \cdot \varphi''(t)
& =
c''(t) \cdot \varphi(c(t))
+
c'(t)^{2} \cdot \varphi'(c(t)) .
\label{equation_general_varphi''}
\end{align}
Substitute $t \longmapsto c(t)$ in
equation~\eqref{equation_general_varphi'}
and we obtain
$\lambda \cdot \varphi'(c(t))
= c'(c(t)) \cdot \varphi(c(c(t)))
= c'(c(t)) \cdot \varphi(t)$.
Applying this identity and~\eqref{equation_general_varphi'}
in equation~\eqref{equation_general_varphi''} yields
\begin{align*}
\lambda \cdot \varphi''(t)
& =
c''(t) \cdot \frac{\lambda}{c'(t)} \cdot \varphi'(t)
+
c'(t)^{2} \cdot \frac{c'(c(t))}{\lambda} \cdot \varphi(t) .
\end{align*}
The derivative of $c(c(t)) = t$ is $c'(t) \cdot c'(c(t)) = 1$.
Applying this last identity yields the second order
differential equation.
Finally, setting $t$ to be $1$
in~\eqref{equation_general_defining}
and $t$ to be $0$ in~\eqref{equation_general_varphi'}
yields the two boundary conditions.
\end{proof}

\section{The operator}
\label{section_alpha}

We now return to our piecewise linear function $b(t)$ from 
equation~\eqref{equation_b} in the introduction.
We determine the eigenvalues explicitly for its associated operator $T$.
The main tool is the second order differential equation in
Proposition~\ref{proposition_general_2nd_order}.
In our case it breaks into two second order differential equations
with constant coefficients.

\begin{proposition}
The eigenvalues of the operator $T$ are given by
\begin{align}
\lambda_{k}
& =
\frac{\sqrt{(1-\alpha) \cdot \alpha}}
{
\arctan\left(\sqrt{\frac{1-\alpha}{\alpha}}\right)
+
\pi \cdot k
} ,
\label{equation_eigenvalue}
\end{align}
where $k$ is an integer.
The eigenfunction associated with the eigenvalue $\lambda_{k}$ is
\begin{align*}
\varphi_{k}(t)
& =
\begin{cases}
\frac{1}{\sqrt{\alpha}}
\cdot
\cos\left( \sqrt{\frac{1-\alpha}{\alpha}} \cdot \frac{1}{\lambda_{k}} \cdot  t \right)
&
\text{if } 0 \leq t \leq \alpha , 
\\
\frac{1}{\sqrt{1-\alpha}}
\cdot
\sin\left( \sqrt{\frac{\alpha}{1-\alpha}} \cdot \frac{1}{\lambda_{k}} \cdot (1-t) \right)
&
\text{if } \alpha \leq t \leq 1 . 
\end{cases}
\end{align*}
Furthermore, the algebraic multiplicity of each of the non-zero
eigenvalues is $1$.
\end{proposition}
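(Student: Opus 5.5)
Since $b$ is piecewise linear, $b''=0$ away from $t=\alpha$, so Proposition~\ref{proposition_general_2nd_order} reduces on each piece to a constant-coefficient equation. On $[0,\alpha]$ we have $b'(t)=-(1-\alpha)/\alpha$, giving $\varphi''+\frac{1-\alpha}{\alpha\lambda^{2}}\varphi=0$. On $[\alpha,1]$ we have $b'(t)=-\alpha/(1-\alpha)$, giving $\varphi''+\frac{\alpha}{(1-\alpha)\lambda^{2}}\varphi=0$. Both are oscillatory for every real $\lambda\neq 0$. The boundary conditions $\varphi'(0)=0$ and $\varphi(1)=0$ force, up to constants $A,B$, the solution $A\cos(\sqrt{(1-\alpha)/\alpha}\,t/\lambda)$ on the left piece and $B\sin(\sqrt{\alpha/(1-\alpha)}\,(1-t)/\lambda)$ on the right piece.

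Rather than merely matching at $t=\alpha$, I would use the first-order relation $\lambda\varphi'(t)=b'(t)\varphi(b(t))$ from the proof of Proposition~\ref{proposition_general_2nd_order}. This relation couples the two pieces through $b$, and it is equivalent to the eigen-equation together with $\varphi(1)=0$: integrating back from $t=1$ recovers $\lambda\varphi(t)=\int_0^{b(t)}\varphi$. Take $t\in[0,\alpha]$ and set $\theta=\sqrt{(1-\alpha)/\alpha}\,t/\lambda$. Then $1-b(t)=\frac{1-\alpha}{\alpha}t$, so $\sqrt{\alpha/(1-\alpha)}\,(1-b(t))/\lambda=\theta$, and both pieces are functions of the same angle $\theta$. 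Plugging into $\lambda\varphi'(t)=-\frac{1-\alpha}{\alpha}\varphi(b(t))$ gives
\[
-A\sqrt{\tfrac{1-\alpha}{\alpha}}\sin\theta=-\tfrac{1-\alpha}{\alpha}B\sin\theta,
\]
that is, $A/\sqrt{\alpha}=B/\sqrt{1-\alpha}$. This is the normalization in the stated $\varphi_k$. The analogous relation on $[\alpha,1]$ gives the same condition.

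Next, the relation at the point $\alpha$ itself, equivalently continuity of $\varphi$ at $\alpha$ (which holds since $\lambda\varphi$ is an integral), determines $\lambda$. Write $\mu=\sqrt{(1-\alpha)\alpha}/\lambda$. Continuity gives
\[
\tfrac{1}{\sqrt\alpha}\cos\mu=\tfrac{1}{\sqrt{1-\alpha}}\sin\mu,
\]
that is, $\tan\mu=\sqrt{(1-\alpha)/\alpha}$. Hence $\mu=\arctan\sqrt{(1-\alpha)/\alpha}+\pi k$ for $k\in\mathbb{Z}$, which is~\eqref{equation_eigenvalue}. Conversely, for each such $k$ I would check directly that $\varphi_k$ satisfies $\lambda_k\varphi_k(t)=\int_0^{b(t)}\varphi_k$. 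Since the defining identity is recovered from the derivative relation plus $\varphi(1)=0$, this is a routine verification. Every eigenfunction has this form and $0$ is not an eigenvalue, so these are all the eigenvalues. Each has a one-dimensional eigenspace, because the ODE with the conditions at $0$ and $1$ leaves only the scalar $A$ free.

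Finally, multiplicity. $T$ is self-adjoint, so algebraic and geometric multiplicity coincide: a Jordan chain $(T-\lambda)^2f=0$ gives $\|(T-\lambda)f\|^2=((T-\lambda)^2f,f)=0$. The main subtlety I expect is making rigorous the passage from eigenfunctions in $L^2$ to differentiable solutions, since Proposition~\ref{proposition_general_2nd_order} assumes smoothness. I would handle this by bootstrapping: if $\lambda\varphi=\int_0^{b(t)}\varphi$, the right side is continuous, hence $\varphi$ is continuous, then $C^1$ on each piece, and so on. I would also take care at the kink $t=\alpha$, where only the first-order relation, not the second-order ODE, is valid.
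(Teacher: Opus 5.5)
Your proof is correct and shares the paper's skeleton. Both use the constant-coefficient equations on $[0,\alpha]$ and $[\alpha,1]$ from Proposition~\ref{proposition_general_2nd_order}, with $\varphi'(0)=0$ and $\varphi(1)=0$ reducing each piece to a single amplitude. Where you differ is in the two conditions that close the system.

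In the paper, with your $\mu=\sqrt{(1-\alpha)\alpha}/\lambda$:
\begin{itemize}
\item it normalizes the right piece;
\item it uses continuity at $\alpha$ to get $A=\tan(\mu)/\sqrt{1-\alpha}$, which still depends on $\lambda$;
\item it evaluates the integral equation at the fixed point of $b$, namely $\lambda p(\alpha)=\int_0^\alpha p(t)\,dt$, to obtain $\tan\mu=\sqrt{(1-\alpha)/\alpha}$, after which $A=1/\sqrt{\alpha}$ follows.
\end{itemize}
You instead impose the reflected relation $\lambda\varphi'(t)=b'(t)\varphi(b(t))$ on a whole piece. The phase of the right piece at $b(t)$ equals the phase $\theta$ of the left piece at $t$, so this fixes the amplitude ratio independently of $\lambda$. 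Continuity alone then gives the eigenvalue condition.

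Your route buys three things:
\begin{itemize}
\item The imposed conditions are visibly sufficient as well as necessary, since integrating back from $t=1$ recovers $\lambda\varphi=T(\varphi)$. So you actually show that every $\lambda_k$ is an eigenvalue, a converse the paper leaves implicit.
\item You supply the regularity bootstrap that the paper tacitly assumes.
\item Your identity $\|(T-\lambda)f\|^2=((T-\lambda)^2f,f)$ justifies the paper's bare assertion that there are no generalized eigenfunctions.
\end{itemize}

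There is one algebra slip to fix. Your displayed identity gives $A\sqrt{(1-\alpha)/\alpha}=\frac{1-\alpha}{\alpha}B$, that is, $A\sqrt{\alpha}=B\sqrt{1-\alpha}$, not $A/\sqrt{\alpha}=B/\sqrt{1-\alpha}$. Read literally, your version would turn the continuity condition into $\tan\mu=\sqrt{\alpha/(1-\alpha)}$, which gives the wrong eigenvalues when $\alpha\neq 1/2$. Your continuity step actually uses the correct amplitudes $1/\sqrt{\alpha}$ and $1/\sqrt{1-\alpha}$, which do satisfy $A\sqrt{\alpha}=B\sqrt{1-\alpha}$. So the conclusion stands once that line is corrected.
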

\begin{proof}
Let $\varphi$ be an eigenfunction,
that is, $\lambda \cdot \varphi = T(\varphi)$.
Assume that $\varphi$ is of the form
\begin{align*}
\varphi(t)
& =
\begin{cases}
p(t) & \text{if } 0 \leq t \leq \alpha , \\
q(t) & \text{if } \alpha \leq t \leq 1 .
\end{cases}
\end{align*}
The derivative of $b(t)$ is given by
\begin{align*}
b'(t)
& = 
\begin{cases}
-(1-\alpha)/\alpha & \text{ for } 0 \leq t \leq \alpha, \\
-\alpha/(1-\alpha) & \text{ for } \alpha \leq t \leq 1 .
\end{cases}
\end{align*}
Note also that $b''(t) = 0$.
Hence Proposition~\ref{proposition_general_2nd_order}
implies that
\begin{align*}
p''(t) 
+
\frac{1}{\lambda^{2}} \cdot \frac{1-\alpha}{\alpha} \cdot p(t) 
& =
0 ,
&&
0 \leq t \leq \alpha,
\\
q''(t)
+
\frac{1}{\lambda^{2}} \cdot \frac{\alpha}{1-\alpha} \cdot q(t) 
& =
0 ,
&&
\alpha \leq t \leq 1.
\end{align*}
These two second order linear differential equations
have the general solutions
\begin{align}
p(t)
& =
A \cdot \cos\left( \sqrt{\frac{1-\alpha}{\alpha}} \cdot \frac{1}{\lambda} \cdot  t \right)
+
B \cdot \sin\left( \sqrt{\frac{1-\alpha}{\alpha}} \cdot \frac{1}{\lambda} \cdot t \right) ,
\label{equation_p_explicit_A_B}
\\
q(t)
& =
C \cdot \cos\left( \sqrt{\frac{\alpha}{1-\alpha}} \cdot \frac{1}{\lambda} \cdot  t \right)
+
D \cdot \sin\left( \sqrt{\frac{\alpha}{1-\alpha}} \cdot \frac{1}{\lambda} \cdot t \right) .
\label{equation_q_explicit_C_D}
\end{align}
It remains to find the four constants $A$, $B$, $C$ and $D$
and an equation for the eigenvalue $\lambda$.

Note that $q(1) = \varphi(1) = 0$
and $p'(0) = \varphi'(0) = 0$
by
Proposition~\ref{proposition_general_2nd_order}.
This last observation implies $B=0$.
Now see that $A = 0$
would imply that $\varphi(t) = 0$.
Hence we obtain that the constant $A$ is non-zero.
Furthermore, setting $t=1$ in
equation~\eqref{equation_q_explicit_C_D}
yields
\begin{align*}
\tan\left( \sqrt{\frac{\alpha}{1-\alpha}} \cdot \frac{1}{\lambda} \right)
& =
- \frac{C}{D} .
\end{align*}
Thus we may select
\begin{align*}
C
& =
\frac{1}{\sqrt{1-\alpha}}
\cdot
\sin\left( \sqrt{\frac{\alpha}{1-\alpha}} \cdot \frac{1}{\lambda} \right),
&
D
& =
- \frac{1}{\sqrt{1-\alpha}}
\cdot
\cos\left( \sqrt{\frac{\alpha}{1-\alpha}} \cdot \frac{1}{\lambda} \right) .
\end{align*}
We are selecting the coefficients such that the eigenfunction will have
unit length; see Lemma~\ref{lemma_unit_length}.
Now $q(t)$ has the expression
\begin{align}
q(t)
& =
\frac{1}{\sqrt{1-\alpha}}
\cdot
\sin\left( \sqrt{\frac{\alpha}{1-\alpha}} \cdot \frac{1}{\lambda} \cdot (1-t) \right) .
\label{equation_q_explicit_again}
\end{align}

Since $\varphi(t)$ is continuous we have
the condition $p(\alpha) = q(\alpha)$.
From equations~\eqref{equation_p_explicit_A_B}
and~\eqref{equation_q_explicit_again}
we have
\begin{align*}
A \cdot \cos\left( \sqrt{(1-\alpha) \cdot \alpha} \cdot \frac{1}{\lambda} \right)
& =
\frac{1}{\sqrt{1-\alpha}}
\cdot
\sin\left( \sqrt{(1-\alpha) \cdot \alpha} \cdot \frac{1}{\lambda} \right) .
\end{align*}
Hence the constant $A$ is given by the tangent function
\begin{align*}
A
& =
\frac{1}{\sqrt{1-\alpha}}
\cdot
\tan\left( \sqrt{(1-\alpha) \cdot \alpha} \cdot \frac{1}{\lambda} \right) .
\end{align*}
Next we compute the integral
\begin{align*}
\int_{0}^{\alpha} p(t) \: dt
& =
A \cdot 
\int_{0}^{\alpha} 
\cos\left( \sqrt{\frac{1-\alpha}{\alpha}} \cdot \frac{1}{\lambda} \cdot  t \right)
\: dt
\\
& =
A \cdot 
\sqrt{\frac{\alpha}{1-\alpha}} \cdot \lambda \cdot
\sin\left( \sqrt{(1-\alpha) \cdot \alpha} \cdot \frac{1}{\lambda} \right) .
\end{align*}
Since
$\lambda \cdot p(\alpha)
= \lambda \cdot \varphi(\alpha)
= \int_{0}^{\alpha} \varphi(t) \: dt
= \int_{0}^{\alpha} p(t) \: dt$
we have
\begin{align*}
\lambda \cdot
A \cdot \cos\left( \sqrt{(1-\alpha) \cdot \alpha} \cdot \frac{1}{\lambda} \right)
& =
A \cdot 
\sqrt{\frac{\alpha}{1-\alpha}} \cdot \lambda \cdot
\sin\left( \sqrt{(1-\alpha) \cdot \alpha} \cdot \frac{1}{\lambda} \right).
\end{align*}
Canceling the factor $A \cdot \lambda$
we obtain
\begin{align*}
\sqrt{\frac{1-\alpha}{\alpha}}
& =
\tan\left( \sqrt{(1-\alpha) \cdot \alpha} \cdot \frac{1}{\lambda} \right) .
\end{align*}
This yields a more explicit expression
$1/\sqrt{\alpha}$ for the constant $A$.
Furthermore we can now solve for the eigenvalue $\lambda$
\begin{align*}
\arctan\left(\sqrt{\frac{1-\alpha}{\alpha}}\right)
+
\pi \cdot k
& =
\sqrt{(1-\alpha) \cdot \alpha} \cdot \frac{1}{\lambda} .
\end{align*}
Solving for $\lambda$ yields~\eqref{equation_eigenvalue}.
Finally, since we obtain a unique eigenfunction,
the geometric multiplicity is $1$ of the non-zero eigenvalues.
Since there are no generalized eigenfunctions,
the algebraic multiplicity is the same as the geometric multiplicity.
\end{proof}

\begin{lemma}
The following two identities hold:
\begin{align*}
\bl{1}{\varphi_{k}}
& = 
\frac{\lambda_{k}}{\sqrt{\alpha}} , &
\bl{\varphi_{k}}{\varphi_{k}}
& = 
1.
\end{align*}
\label{lemma_unit_length}
\end{lemma}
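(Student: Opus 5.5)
The first identity is immediate from the eigenvalue equation: $\bl{1}{\varphi_{k}} = \int_{0}^{1} \varphi_{k}(s)\,ds = \int_{0}^{b(0)} \varphi_{k}(s)\,ds = T(\varphi_{k})(0) = \lambda_{k}\,\varphi_{k}(0)$, since $b(0)=1$. From the explicit formula, $\varphi_{k}(0) = \frac{1}{\sqrt{\alpha}}\cos(0) = 1/\sqrt{\alpha}$, which gives $\bl{1}{\varphi_{k}} = \lambda_{k}/\sqrt{\alpha}$. This step needs no further work.

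For the norm I split the integral at $t=\alpha$. Write $\theta = \sqrt{(1-\alpha)\alpha}/\lambda_{k}$. We know $\tan\theta = \sqrt{(1-\alpha)/\alpha}$ from the proof of the previous proposition. On $[0,\alpha]$, substitute $u = \sqrt{(1-\alpha)/\alpha}\cdot t/\lambda_{k}$, so $u$ runs from $0$ to $\theta$. On $[\alpha,1]$, substitute $v = \sqrt{\alpha/(1-\alpha)}\cdot (1-t)/\lambda_{k}$, so $v$ also runs from $0$ to $\theta$. The two pieces become
\begin{align*}
\frac{1}{\alpha}\cdot \lambda_{k}\sqrt{\frac{\alpha}{1-\alpha}} \int_{0}^{\theta}\cos^{2}u\,du
+
\frac{1}{1-\alpha}\cdot \lambda_{k}\sqrt{\frac{1-\alpha}{\alpha}} \int_{0}^{\theta}\sin^{2}v\,dv .
\end{align*}
Both prefactors equal $\lambda_{k}/\sqrt{(1-\alpha)\alpha} = 1/\theta$.

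Adding the two pieces, $\cos^{2}+\sin^{2}=1$ gives $\frac{1}{\theta}\cdot\theta = 1$. The equality of the prefactors is exactly why the normalizations $1/\sqrt{\alpha}$ and $1/\sqrt{1-\alpha}$ were chosen. The only step needing care is confirming this equality of coefficients after the change of variables, so that the cosine-squared and sine-squared integrals combine directly. In particular, the tangent relation is not even needed for the norm.
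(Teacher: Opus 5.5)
Your proof is correct and follows the paper's argument: the first identity comes from evaluating the eigenvalue equation at $t=0$, and the norm is computed by splitting the integral at $t=\alpha$, without using the tangent relation. The only cosmetic difference is in the norm computation. The paper writes out the antiderivatives $t/2 + \sin(2\omega t)/(4\omega)$ and cancels the two $\sin(2\theta)$ terms explicitly, whereas your rescaling of both pieces to $[0,\theta]$ with the common prefactor $1/\theta$ makes that cancellation immediate via $\cos^{2}+\sin^{2}=1$ (this also holds for negative $\lambda_k$ if the integrals are read as oriented).
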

\begin{proof}
Setting $t=0$ in~\eqref{equation_general_defining}
yields the expression for the inner product $\bl{1}{\varphi_{k}}$.
Next we have
\begin{align*}
\bl{\varphi_{k}}{\varphi_{k}}
& =
\int_{0}^{1} \varphi_{k}(t)^{2} \: dt 
=
\int_{0}^{\alpha} p(t)^{2} \: dt 
+
\int_{\alpha}^{1} q(t)^{2} \: dt 
\\
& =
\frac{1}{\alpha} \cdot
\left[
\frac{t}{2}
+
\frac{\sin\left(
2 \cdot \sqrt{\frac{1-\alpha}{\alpha}} \cdot \frac{1}{\lambda_{k}} \cdot t
\right)}
{4 \cdot \sqrt{\frac{1-\alpha}{\alpha}} \cdot \frac{1}{\lambda_{k}}}
\right]_{0}^{\alpha}
+
\frac{1}{1-\alpha} \cdot
\left[
\frac{t}{2}
+
\frac{\sin\left(
2 \cdot \sqrt{\frac{\alpha}{1-\alpha}} \cdot \frac{1}{\lambda_{k}} \cdot (1-t)
\right)}
{4 \cdot \sqrt{\frac{\alpha}{1-\alpha}} \cdot \frac{1}{\lambda_{k}}}
\right]_{\alpha}^{1} 
\\
& =
1 +
\frac{1}{\alpha} \cdot
\left[
\frac{\sin\left(
2 \cdot \sqrt{\frac{1-\alpha}{\alpha}} \cdot \frac{1}{\lambda_{k}} \cdot t
\right)}
{4 \cdot \sqrt{\frac{1-\alpha}{\alpha}} \cdot \frac{1}{\lambda_{k}}}
\right]_{0}^{\alpha}
+
\frac{1}{1-\alpha} \cdot
\left[
\frac{\sin\left(
2 \cdot \sqrt{\frac{\alpha}{1-\alpha}} \cdot \frac{1}{\lambda_{k}} \cdot (1-t)
\right)}
{4 \cdot \sqrt{\frac{\alpha}{1-\alpha}} \cdot \frac{1}{\lambda_{k}}}
\right]_{\alpha}^{1} 
\\
& =
1 +
\frac{1}{\alpha} \cdot
\frac{\sin\left(
2 \cdot \sqrt{\frac{1-\alpha}{\alpha}} \cdot \frac{1}{\lambda_{k}} \cdot \alpha
\right)}
{4 \cdot \sqrt{\frac{1-\alpha}{\alpha}} \cdot \frac{1}{\lambda_{k}}}
-
\frac{1}{1-\alpha} \cdot
\frac{\sin\left(
2 \cdot \sqrt{\frac{\alpha}{1-\alpha}} \cdot \frac{1}{\lambda_{k}} \cdot (1-\alpha)
\right)}
{4 \cdot \sqrt{\frac{\alpha}{1-\alpha}} \cdot \frac{1}{\lambda_{k}}}
\\
& =
1 +
\frac{\sin\left(
2 \cdot \sqrt{\alpha \cdot (1-\alpha)} \cdot \frac{1}{\lambda_{k}}
\right)}
{4 \cdot \sqrt{\alpha \cdot (1-\alpha)} \cdot \frac{1}{\lambda_{k}}}
-
\frac{\sin\left(
2 \cdot \sqrt{\alpha \cdot (1-\alpha)} \cdot \frac{1}{\lambda_{k}}
\right)}
{4 \cdot \sqrt{\alpha \cdot (1-\alpha)} \cdot \frac{1}{\lambda_{k}}} 
= 1,
\end{align*}
where in the fourth step we use that
$1/\alpha \cdot \left[ t/2 \right]_{0}^{\alpha}
+
1/(1-\alpha) \cdot \left[ t/2 \right]_{\alpha}^{1} = 1$
and in the fifth step setting $t=0$ and $t=1$ in the respective antiderivatives yields $0$.
\end{proof}

\begin{theorem}
The volume of the two $n$-dimensional sets $P_{n}$ and $Q_{n}$,
where $n \geq 2$,
is given by the convergent series
\begin{align*}
\Vol(P_{n})
& = 
\frac{1}{\alpha}
\cdot
\sum_{k = -\infty}^{\infty}
\left(
\frac{\sqrt{(1-\alpha) \cdot \alpha}}
{
\arctan\left(\sqrt{\frac{1-\alpha}{\alpha}}\right)
+
\pi \cdot k
}
\right)^{n+1} ,
\\
\Vol(Q_{n})
& =
\sum_{k = -\infty}^{\infty}
\left(
\frac{\sqrt{(1-\alpha) \cdot \alpha}}
{
\arctan\left(\sqrt{\frac{1-\alpha}{\alpha}}\right)
+
\pi \cdot k
}
\right)^{n} .
\end{align*}
\label{theorem_volumes_P_Q}
\end{theorem}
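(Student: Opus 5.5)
The plan is to specialize Theorem~\ref{theorem_volumes} to the operator $T$ associated with $b(t)$, using the explicit spectral data from the preceding proposition and Lemma~\ref{lemma_unit_length}. The eigenvalues $\lambda_{k}$, $k \in \mathbb{Z}$, are given by~\eqref{equation_eigenvalue}. Each has multiplicity one, and since $0$ is not an eigenvalue these account for the whole spectrum. By the spectral theorem the normalized eigenfunctions $\varphi_{k}$ then form a complete orthonormal system of $L^{2}([0,1])$. Hence the index set in Theorem~\ref{theorem_volumes} is $K = \mathbb{Z}$.

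\textbf{Volume of $P_{n}$.} Substitute Lemma~\ref{lemma_unit_length} into~\eqref{equation_volume_P}. Each term becomes
\begin{align*}
\frac{\bl{1}{\varphi_{k}}^{2}}{\bl{\varphi_{k}}{\varphi_{k}}} \cdot \lambda_{k}^{n-1}
=
\frac{\lambda_{k}^{2}}{\alpha} \cdot \lambda_{k}^{n-1}
=
\frac{1}{\alpha} \cdot \lambda_{k}^{n+1} .
\end{align*}
This is exactly the claimed series. The formula~\eqref{equation_volume_P} needs no extra hypothesis, since it only uses the $L^{2}$ expansion of the constant function $1$ and continuity of $T^{n-1}$. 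As a check of absolute convergence, $\lambda_{k} = O(1/|k|)$ and $n+1 \geq 3$.

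\textbf{Volume of $Q_{n}$.} Here we verify the hypothesis of Theorem~\ref{theorem_volumes}, namely that $\sum_{k} \lambda_{k}^{n}$ converges absolutely. Write $\theta = \arctan\bigl(\sqrt{(1-\alpha)/\alpha}\bigr)$. Since $0 < \alpha < 1$, we have $0 < \theta < \pi/2$, so the denominators $\theta + \pi k$ never vanish. Moreover $|\theta + \pi k| \geq \pi(|k| - 1/2)$ for $k \neq 0$. Therefore
\begin{align*}
|\lambda_{k}|^{n} \leq \frac{\bigl((1-\alpha)\alpha\bigr)^{n/2}}{\pi^{n} \bigl(|k|-1/2\bigr)^{n}} ,
\end{align*}
which is summable for $n \geq 2$. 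Theorem~\ref{theorem_volumes} then gives $\Vol(Q_{n}) = \sum_{k \in \mathbb{Z}} \lambda_{k}^{n}$, the second formula.

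\textbf{Main obstacle.} The delicate point is the completeness claim, namely that the list $\{\lambda_{k}\}_{k \in \mathbb{Z}}$ exhausts the spectrum, so that $K$ really is $\mathbb{Z}$. I would argue this as follows. By the spectral theorem for the compact self-adjoint operator $T$, the eigenfunctions with nonzero eigenvalues span the closure of the range of $T$. Since $\ker T = 0$, that closure is all of $L^{2}([0,1])$. The preceding proposition shows that any eigenfunction must have the stated form, with $\lambda$ satisfying the tangent equation, so every nonzero eigenvalue appears in the list. Hence nothing is missing.

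A subtlety arises for $n = 2$, where $\sum_{k} |\lambda_{k}|$ diverges, so $T$ itself is not trace class. This is harmless: Theorem~\ref{theorem_volumes} only requires absolute convergence of $\sum_{k} \lambda_{k}^{n}$ for the given $n \geq 2$, which holds.
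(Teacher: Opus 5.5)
Your proposal is correct and follows the paper's proof. You specialize Theorem~\ref{theorem_volumes} with $K=\mathbb{Z}$, use Lemma~\ref{lemma_unit_length} to turn each coefficient $\bl{1}{\varphi_k}^2/\bl{\varphi_k}{\varphi_k}$ into $\lambda_k^2/\alpha$, and check absolute convergence of $\sum_k \lambda_k^n$ for $n\geq 2$; the paper does this in two lines, and your explicit denominator bound and completeness argument via $\ker T=0$ just spell out what it leaves implicit.
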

\begin{proof}
Both results follow from Theorem~\ref{theorem_volumes}.
For the second result
observe that the series converges absolutely.
\end{proof}

\begin{corollary}
The volumes of the $n$-dimensional set $P_{n}$
and the $(n+1)$-dimensional set $Q_{n+1}$
satisfy the identity
$\alpha \cdot \Vol(P_{n}) = \Vol(Q_{n+1})$
for $n \geq 2$.
\label{corollary_factor_alpha}
\end{corollary}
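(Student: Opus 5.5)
This follows directly from Theorem~\ref{theorem_volumes_P_Q} by comparing the two series term by term. Write
\begin{align*}
\lambda_{k}
& =
\frac{\sqrt{(1-\alpha) \cdot \alpha}}
{\arctan\left(\sqrt{\frac{1-\alpha}{\alpha}}\right) + \pi \cdot k} ,
\end{align*}
as in equation~\eqref{equation_eigenvalue}. Theorem~\ref{theorem_volumes_P_Q} gives $\Vol(P_{n}) = \frac{1}{\alpha} \sum_{k} \lambda_{k}^{n+1}$, so multiplying by $\alpha$ gives $\alpha \cdot \Vol(P_{n}) = \sum_{k} \lambda_{k}^{n+1}$. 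The same theorem applied in dimension $n+1$ gives $\Vol(Q_{n+1}) = \sum_{k} \lambda_{k}^{n+1}$. The two series are identical, which proves the identity.

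Two points need checking. First, the hypotheses: Theorem~\ref{theorem_volumes_P_Q} requires dimension at least $2$. Since $n \geq 2$, we have $n+1 \geq 3 \geq 2$, so both applications are valid.

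Second, convergence. The formula for $\Vol(P_{n})$ comes from Theorem~\ref{theorem_volumes}: substituting Lemma~\ref{lemma_unit_length}, which gives $(1,\varphi_{k})^{2}/(\varphi_{k},\varphi_{k}) = \lambda_{k}^{2}/\alpha$, turns $\lambda_{k}^{n-1}$ into $\lambda_{k}^{n+1}/\alpha$. The formula for $\Vol(Q_{n+1})$ needs $\sum_{k} |\lambda_{k}|^{n+1}$ to converge. This holds because $|\lambda_{k}| = O(1/|k|)$ and $n+1 \geq 3 > 1$. The denominator never vanishes, since $\arctan\bigl(\sqrt{(1-\alpha)/\alpha}\bigr)$ lies strictly between $0$ and $\pi/2$ and so is not an integer multiple of $\pi$.

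There is no real obstacle. The only care needed is to index the eigenvalues over the same set $k \in \mathbb{Z}$ in both series, and to note that the absolute convergence just established allows the term-by-term comparison.
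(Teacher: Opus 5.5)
Your proposal is correct and is exactly the paper's argument. The paper gives no separate proof of this corollary because it follows immediately by observing that the series for $\alpha \cdot \Vol(P_{n})$ and for $\Vol(Q_{n+1})$ in Theorem~\ref{theorem_volumes_P_Q} are identical. Your checks on the range of $n$ and on absolute convergence are sound; they are already handled in the proof of that theorem.
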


The Lipschitz identity~\cite{Lipschitz}
is the following identity.
\begin{theorem}[Lipschitz]
For a real variable $0 < x < 1$ and
$n$ a positive integer greater than $1$
the following identity holds:
\begin{align}
\sum_{k=-\infty}^{\infty} \frac{1}{(x+k)^{n}}
& =
(-1)^{n-1}
\cdot 
\frac{\pi}{(n-1)!}
\cdot
\frac{d^{n-1}}{dx^{n-1}} \cot(\pi \cdot x) . 
\label{equation_Lipschitz}
\end{align}
\label{theorem_Lipschitz}
\end{theorem}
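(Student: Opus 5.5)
The plan is to prove the Lipschitz identity from the partial fraction expansion of the cotangent, followed by repeated termwise differentiation. The starting point is the classical Mittag-Leffler expansion
\begin{align*}
\pi \cot(\pi x)
& =
\frac{1}{x} + \sum_{k=1}^{\infty} \left( \frac{1}{x+k} + \frac{1}{x-k} \right)
=
\lim_{N \to \infty} \sum_{k=-N}^{N} \frac{1}{x+k},
\end{align*}
valid for $x \notin \mathbb{Z}$, in particular for $0 < x < 1$. If we do not want to quote it, it can be derived from the product formula $\sin(\pi x) = \pi x \prod_{k \geq 1}(1 - x^{2}/k^{2})$ by logarithmic differentiation. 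Alternatively, one can observe that $\pi\cot(\pi x) - \sum_{k}$ (symmetrically summed) is a bounded entire periodic function vanishing at $x = 1/2$, and apply Liouville's theorem.

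Next I will differentiate the symmetric sum termwise $n-1$ times. To justify this, note that the grouped terms $\frac{1}{x+k} + \frac{1}{x-k} = \frac{2x}{x^{2}-k^{2}}$ are $O(1/k^{2})$ uniformly on compact subsets of $(0,1)$. The same holds for all their derivatives: the $j$-th derivative of $1/(x+k)$ is $(-1)^{j} j!/(x+k)^{j+1}$, which is $O(1/k^{2})$ for $j \geq 1$. Hence the series of derivatives converges uniformly on compact subsets, and the standard theorem on termwise differentiation of uniformly convergent series applies. For the first derivative the series $\sum_k 1/(x+k)^{2}$ already converges absolutely, so the symmetric grouping is no longer needed from that point on. The only real subtlety is this justification, because the undifferentiated series is only conditionally convergent. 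Handling it through the grouped, uniformly convergent form resolves the issue.

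Carrying out the differentiation gives
\begin{align*}
\frac{d^{n-1}}{dx^{n-1}} \pi\cot(\pi x)
& =
\sum_{k=-\infty}^{\infty} (-1)^{n-1} \frac{(n-1)!}{(x+k)^{n}},
\end{align*}
and this series converges absolutely since $n \geq 2$. Multiplying by $(-1)^{n-1}/(n-1)!$ yields equation~\eqref{equation_Lipschitz}.
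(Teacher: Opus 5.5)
Your proof is correct. Quoting the partial fraction expansion $\pi\cot(\pi x)=\lim_{N\to\infty}\sum_{k=-N}^{N}1/(x+k)$ and differentiating $n-1$ times gives exactly the stated identity. You also handle the one delicate point properly: the undifferentiated series converges only conditionally, so you differentiate the grouped series with terms $2x/(x^{2}-k^{2})$, whose derivatives are uniformly $O(1/k^{2})$. For $j\ge 1$ the series $\sum_{k}(x+k)^{-j-1}$ then converge absolutely and uniformly by the Weierstrass $M$-test.

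The paper differs only in that it gives no proof at all. It quotes the identity from Lipschitz and uses it, after the substitution $y=\pi x$, only at the single point $y=\arctan\sqrt{(1-\alpha)/\alpha}\in(0,\pi/2)$. Citing keeps the paper short. Your argument makes the closed formula for $\Vol(Q_{n})$ self-contained modulo the classical cotangent expansion, and it shows the identity holds for every real non-integer $x$, not only for $0<x<1$.

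Your route is also the more natural justification of the identity in the form stated. Lipschitz's summation formula proper is usually written for $\tau$ in the upper half-plane as $\sum_{k}(\tau+k)^{-n}=\frac{(-2\pi i)^{n}}{(n-1)!}\sum_{m\ge 1}m^{n-1}e^{2\pi i m\tau}$, whose right-hand side does not converge for real $\tau$. For integer $n$ that formula can itself be obtained by differentiating the same cotangent expansion, written as $\pi\cot(\pi\tau)=-\pi i-2\pi i\sum_{m\ge 1}e^{2\pi i m\tau}$.
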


\begin{corollary}
The volume of the $n$-dimensional set $Q_{n}$
is given by
\begin{align*}
\Vol(Q_{n})
& =
\frac{(-1)^{n-1}}{(n-1)!}
\cdot
((1-\alpha) \cdot  \alpha)^{n/2}
\cdot
\cot^{(n-1)}\left(\arctan\left(\sqrt{\frac{1-\alpha}{\alpha}}\right)\right)
\end{align*}
for $n \geq 2$.
\end{corollary}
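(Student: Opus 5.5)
Apply the Lipschitz identity, Theorem~\ref{theorem_Lipschitz}, directly to the series for $\Vol(Q_{n})$ in Theorem~\ref{theorem_volumes_P_Q}. Set
\begin{align*}
\theta = \arctan\left(\sqrt{\frac{1-\alpha}{\alpha}}\right), \qquad x = \frac{\theta}{\pi} .
\end{align*}
Since $0<\alpha<1$, the argument $\sqrt{(1-\alpha)/\alpha}$ is positive and finite, so $0<\theta<\pi/2$. Hence $0<x<1/2$, and $x$ lies in the range required by the Lipschitz identity.

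The first step is to factor out the constants in each term of the series:
\begin{align*}
\left(\frac{\sqrt{(1-\alpha)\alpha}}{\theta+\pi k}\right)^{n}
=
\frac{((1-\alpha)\alpha)^{n/2}}{\pi^{n}} \cdot \frac{1}{(x+k)^{n}} .
\end{align*}
Summing over all integers $k$ and using~\eqref{equation_Lipschitz} with $n\geq 2$ gives
\begin{align*}
\Vol(Q_{n})
=
\frac{((1-\alpha)\alpha)^{n/2}}{\pi^{n}} \cdot (-1)^{n-1} \cdot \frac{\pi}{(n-1)!} \cdot \frac{d^{n-1}}{dx^{n-1}}\cot(\pi x) .
\end{align*}

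The only step needing care is the change of variables in the derivative. By the chain rule applied $n-1$ times,
\begin{align*}
\frac{d^{n-1}}{dx^{n-1}}\cot(\pi x) = \pi^{n-1}\cot^{(n-1)}(\pi x),
\end{align*}
and we evaluate this at $\pi x=\theta$. The powers of $\pi$ then cancel exactly, since $\pi\cdot\pi^{n-1}/\pi^{n}=1$. What remains is the stated formula,
\begin{align*}
\Vol(Q_{n}) = \frac{(-1)^{n-1}}{(n-1)!} \cdot ((1-\alpha)\alpha)^{n/2} \cdot \cot^{(n-1)}(\theta).
\end{align*}

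There is no real obstacle. The points to check are that absolute convergence for $n\geq 2$ justifies using the series (this is already noted in Theorem~\ref{theorem_volumes_P_Q}), that $x\in(0,1)$, and that the powers of $\pi$ are tracked correctly.
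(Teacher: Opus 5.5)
Your proposal is correct and follows essentially the same route as the paper. The paper first rescales the Lipschitz identity via $y = \pi x$ to get $\sum_{k} (y+\pi k)^{-n} = \frac{(-1)^{n-1}}{(n-1)!}\cot^{(n-1)}(y)$ and then sets $y=\theta$; this is the same $\pi$-bookkeeping and chain-rule argument you give, done in the opposite order.
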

\begin{proof}
By substituting $y = \pi \cdot x$ in the Lipschitz identity, equation~\eqref{equation_Lipschitz}, becomes
\begin{align*}
\sum_{k=-\infty}^{\infty} \frac{1}{(y + \pi \cdot k)^{n}}
& =
\frac{(-1)^{n-1}}{(n-1)!}
\cdot
\cot^{(n-1)}(y) , 
\end{align*}
where $\cot^{(n-1)}(y)$ denotes the $(n-1)$st derivative of the cotangent function.
The result follows now by setting $y = \arctan(\sqrt{(1-\alpha)/\alpha})$.
\end{proof}

\section{Concluding remarks}

Is there a direct explanation of Corollary~\ref{corollary_factor_alpha}
without first determining the volumes of the two sets?
Most elegant would be a dissection proof between the set
$Q_{n+1}$ and the Cartesian product $[0,\alpha] \times P_{n}$.

When $1/2 < \alpha < 1$, can the $f$-vectors 
of the polytopes $P_{n}$ and $Q_{n}$ be determined?
The original case $\alpha = 1/2$ was studied
for $P_{n}$ in
the two papers~\cite{Chebikin_Ehrenborg,Ehrenborg_Happ},
whereas it was examined for $Q_{n}$ in the paper~\cite{Ehrenborg_cycle}.
When the parameter $\alpha$ is rational,
can the Ehrhart quasi-polynomials 
of $P_{n}$ and $Q_{n}$ be described?

The graph polytope of a graph $G$ on the vertex set
$\{1,2, \ldots, n\}$ is the polytope
$\{\vec{x} \in \Rrr_{\geq 0}^{n} : \forall ij \in E(G) \:\: x_{i}+x_{j} \leq 1\}$.
Hence for a self-inverse function $c(t)$ on the unit interval, what can be said
about the set
\begin{align*}
\{\vec{x} \in \Rrr_{\geq 0}^{n} : \forall ij \in E(G) \:\: x_{i} \leq c(x_{j})\} ?
\end{align*}

There are other natural self-inverse functions on the interval~$[0,1]$
to consider. Here we present three examples.
\begin{itemize}
\item[(i)]
Let $\alpha_{1}, \alpha_{2}, \ldots, \alpha_{m}$ be
$m$ positive real numbers such that
$\alpha_{1} + \alpha_{2} + \cdots + \alpha_{m} = 1$.
Let $\beta_{k}$ be the partial sum
$\alpha_{1} + \alpha_{2} + \cdots + \alpha_{k}$
such that the interval $[0,1]$ is the union
of the intervals
$[\beta_{0},\beta_{1}] \cup [\beta_{1},\beta_{2}] \cup \cdots \cup
[\beta_{m-1},\beta_{m}]$.
Furthermore let $\rho_{k} = {\alpha_{m+1-k}}/{\alpha_{k}}$.
Note that $\rho_{m+1-k} = \rho_{k}^{-1}$.
Define the linear function $\ell_{k}(t)$
such that $\ell_{k}(t)$ maps the interval
$[\beta_{k-1},\beta_{k}]$
to the interval
$[\beta_{m-k},\beta_{m+1-k}]$
by
\begin{align*}
\ell_{k}(t)
& =
\rho_{k} \cdot (\beta_{k-1} - t) + \beta_{m+1-k} .
\end{align*}
Note that $\ell_{k}(t)$ is a decreasing function.
Define the piecewise linear function $c(t)$ by 
$c(t) = \ell_{k}(t)$ on the interval $[\beta_{k-1},\beta_{k}]$.

\item[(ii)]
The M\"obius tranformation
$c(t) = \frac{1 - t}{1 + a \cdot t}$
for $-1 < a$.

\item[(iii)]
Finally, the function $c(t) = \sqrt[q]{1 - t^{q}}$.
\end{itemize}
All three of these examples are self-inverse functions.
Can the volumes of the associated sets
$P_{n}$ and~$Q_{n}$ be computed in these cases?

Finally, are there other functions $c(t)$ such that
the volumes of the two sets~$P_{n}$ and~$Q_{n+1}$ are
related as in Corollary~\ref{corollary_factor_alpha}?

\section*{Acknowledgements}

The first author thanks Cornell University where parts of this paper were written. 
The authors thank the referees and the editor for their comments
on an earlier version of this paper.
This work was partially supported by a grant from the
Simons Foundation (\#854548 to Richard Ehrenborg).

\newcommand{\journal}[6]{{\sc #1,} #2, {\it #3} {\bf #4} (#5), #6.}
\newcommand{\book}[4]{{\sc #1,} ``#2,'' #3, #4.}
\newcommand{\thesis}[4]{{\sc #1,} ``#2,'' Doctoral dissertation, #3, #4.}
\newcommand{\springer}[4]{{\sc #1,} ``#2,'' Lecture Notes in Math.,
                     Vol.\ #3, Springer-Verlag, Berlin, #4.}
\newcommand{\preprint}[3]{{\sc #1,} #2, preprint #3.}
\newcommand{\preparation}[2]{{\sc #1,} #2, in preparation.}
\newcommand{\appear}[3]{{\sc #1,} #2, to appear in {\it #3}}
\newcommand{\submitted}[4]{{\sc #1,} #2, submitted to {\it #3}, #4.}
\newcommand{\JCTA}{J.\ Combin.\ Theory Ser.\ A}
\newcommand{\AdvancesinMathematics}{Adv.\ Math.}
\newcommand{\AdvancesinAppliedMathematics}{Adv.\ in Appl.\ Math.}
\newcommand{\JournalofAlgebraicCombinatorics}{J.\ Algebraic Combin.}

\end{document}